\def\Conf{\mathop{\fam 0 Conf}\nolimits}
\def\Cur{\mathop{\fam 0 Cur}\nolimits}
\def\Vir{\mathop{\fam 0 Vir}\nolimits}
\def\oo#1{\mathrel {{}_{(#1)}}}
\newtheorem{thm}{Theorem}
\newtheorem{lem}{Lemma}
\newtheorem{prop}{Proposition}
\newtheorem{cor}{Corollary}
\newtheorem{remark}{Remark}
\theoremstyle{definition}
\newtheorem{definition}{Definition}
\newtheorem{example}{Example}
\title[Gr\"obner---Shirshov Bases for Conformal Algebras]{Gr\"obner---Shirshov 
Bases for Associative\\ Conformal Algebras With Arbitrary Locality Function}
\author{P. Kolesnikov}
\address{Sobolev Institute of Mathematics,\\ Akad. Koptyug prosp., 4\\ 630090 Novosibirsk, Russia\\
Email: pavelsk@math.nsc.ru}
\subjclass[2010]{16S15, 13P10, 17A32}
\keywords{Conformal Algebra; Module; Universal Envelope; Gr\"obner---Shirshov Basis}
\begin{document}

\begin{abstract}
 We present an approach to the computation of confluent systems of defining relations 
 in associative conformal algebras based on the similar technique for 
 modules over ordinary associative algebras. 
\end{abstract}

\maketitle

\begin{center}
 Dedicated to my Teacher Leonid A. Bokut to his 80th birthday
\end{center}

\section{Introduction}

Gr\"obner---Shirshov bases (GSB) is a useful tool for solving theoretical problems in algebra (see, e.g., \cite{BokChen13}. 
It is not so common in applications as Gr\"obner bases in commutative algebra 
since the construction of a confluent rewriting system for a given ideal may not 
be done in a finite number of steps.  

In order to establish a version of GSB theory for a particular class of algebraic systems
we usually need to determine the structure of free objects (determine what is a normal word),
define a linear order on the set of normal words compatible with algebraic structure (i.e., 
monomial order), define a sort of elimination procedure, and state an appropriate version 
of the Composition-Diamond Lemma (CD-Lemma). The latter is the key statement of the theory, a criterion 
for a set of defining relations to be confluent. 

However, there are many cases when realization of the entire program described above is excessive. 
For example, GSB theory for modules over associative algebras \cite{KangLee} can be 
explicitly deduced from the classical CD-Lemma for associative algebras (see Theorem~\ref{thm:CD-mod} below), 
GSB theory for di-algebras \cite{BCL10, ZhangChen17} may also be considered as an application of the same 
statement \cite{Kol2017AC}. In a similar way, GSB theory for Leibniz algebras may be derived 
from the Shirshov's CD-Lemma for Lie algebras.

Conformal algebras appeared in mathematical physics as ``singular parts'' of vertex algebras \cite{Kac98}.
Combinatorial study of associative conformal algebras was started in \cite{Ro99}. 
In a series of papers \cite{BFK00, BFK2004, NiChen2017}, different versions of the CD-Lemma 
for associative conformal algebras was stated. 
In this paper, we show how to derive GSB theory for associative conformal algebras from 
CD-Lemma for modules over (ordinary) associative algebras. This approach is technically simpler
than and it is more general: we do not assume the locality of generators is bounded.
Moreover, in this way one may use existing computer algebra 
systems to calculate compositions in conformal algebras. 
As an application, we calculate GSB for universal associative envelopes of 
some Lie conformal algebras.

Throughout the paper, $\Bbbk $ is a base field of 
characteristic zero, $\mathbb Z_+$ is the set of 
non-negative integers. Given a set $X$, $X^*$ stands 
for the set of nonempty words in the alphabet $X$, 
$X^\#$ denotes the set $X^*$ together with the
empty word. We will use notation like $x^{(s)}$ 
for $\frac{1}{s!} x^s$ for $s\in \mathbb Z_+$.

\section{Preliminaries: CD-Lemma for modules}

Let us recall the classical CD-lemma for associative algebras in the form of \cite{Bok76}. 

Suppose $O$ is a set of generators and $\preceq $ is a monomial order on $O^*$. 
As usual, $\bar f$ denotes the principal monomial of a nonzero polynomial
from the free associative algebra $\Bbbk \langle O\rangle$ relative to the order $\preceq $.

If $f$ and $g$ are monic polynomials in $\Bbbk \langle O\rangle$
such that $w=\bar f = u\bar g v$ for some $u,v\in O^\#$, then 
\[
 (f,g)_w = f-ugv
\]
is said to be a composition of inclusion.

If $\bar f = u_1u_2$, $\bar g = u_2v_2$ for some $u_1,u_2,v_2\in O^*$ then 
\[
 (f,g)_w = fv_2-u_1g, \quad w=u_1u_2v_2,
\]
is said to be a composition of intersection.

Given a polynomial $f\in \Bbbk\langle O\rangle$, a set of polynomials $\Sigma \subset \Bbbk \langle O\rangle$, 
and a word $w\in O^*$, we say $f$ is trivial modulo $\Sigma $ and $w$ if $f$ can be presented in the form
\[
 f = \sum\limits_i \alpha_i u_is_iv_i,\quad \alpha_i\in \Bbbk ,\ u_i,v_i\in O^\#, \ s_i\in \Sigma ,
\]
where $u_i\bar s_iv_i \prec w$.
We will use the following notation:
\[
 f\equiv g \pmod {\Sigma ,w}
\]
if $f-g$ is trivial modulo $\Sigma $ and $w$.

A word $u\in O^*$ is said to be $\Sigma $-reduced if 
$u\ne v_1\bar s v_2$, $v_1,v_2\in O^\#$, $s\in \Sigma $.

A set of monic polynomials $\Sigma \subset \Bbbk \langle O\rangle $ 
is said to be a Gr\"obner---Shirshov basis  (GSB)
if $(f,g)_w\equiv 0 \pmod {\Sigma ,w}$ for every pair of polynomials  $f,g\in \Sigma $.

By abuse of terminology, we say $\Sigma $ is a GSB of the ideal $I=(\Sigma )$ in $\Bbbk \langle O\rangle$.

Note that a pair of polynomials in $\Bbbk \langle O\rangle$ may have more than one composition. 
In order to make sure that $\Sigma $ is a GSB we have to check if all compositions are trivial 
in the above-mentioned sense.

The following statement is known as 
the CD-Lemma for associative algebras~\cite{Bok76}.

\begin{thm}\label{thm:CD-ass}
 For a set $\Sigma $ of monic polynomials in $\Bbbk \langle O\rangle$ the following statements are equivalent:
 \begin{enumerate}
  \item $\Sigma $ is a GSB;
  \item If $f\in (\Sigma )$ then $\bar f = u\bar s v$ for appropriate $u,v\in O^\#$, $s\in \Sigma $;
  \item The set of $\Sigma $-reduced words forms a linear basis of $\Bbbk \langle O\mid \Sigma \rangle $.
 \end{enumerate}
\end{thm}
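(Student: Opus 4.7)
I plan to prove the equivalences in the cyclic order $(1)\Rightarrow (2)\Rightarrow (3)\Rightarrow (1)$.

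The main obstacle is the implication $(1)\Rightarrow (2)$, the classical Shirshov--Diamond step. Given $f\in(\Sigma)$, write $f=\sum_i\alpha_i u_i s_i v_i$ with $s_i\in\Sigma$, and set $w_{\max}=\max_i u_i\bar s_i v_i$ (maximum taken with respect to $\preceq$) and let $k$ be the number of summands attaining this maximum. I argue by induction on the pair $(w_{\max},k)$ ordered lexicographically; this is a well-order since $\preceq$ is. If $w_{\max}=\bar f$ then one of the $u_i\bar s_i v_i$ equals $\bar f$ and we are done. Otherwise the top terms must cancel, so $k\geq 2$, and I pick two summands $u_1 s_1 v_1$ and $u_2 s_2 v_2$ with $u_1\bar s_1 v_1=u_2\bar s_2 v_2=w_{\max}$. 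The positions of $\bar s_1$ and $\bar s_2$ inside $w_{\max}$ fall into three cases: (a) they are disjoint, in which case a direct rewriting substitutes $s_1\bar s_2\to \bar s_1 s_2$ up to terms with strictly smaller leading word; (b) one contains the other, yielding a composition of inclusion; (c) they overlap properly, yielding a composition of intersection. In cases (b), (c) the hypothesis that $\Sigma$ is a GSB, together with the obvious trivial rewriting in case (a), allows me to rewrite $\alpha_1 u_1 s_1 v_1+\alpha_2 u_2 s_2 v_2$ modulo $\Sigma$ and $w_{\max}$ so that in the new expression for $f$ either the maximum drops below $w_{\max}$, or the multiplicity $k$ drops by at least one. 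Induction then delivers a presentation of $f$ in which $u_i\bar s_i v_i=\bar f$ for some $i$.

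The implication $(2)\Rightarrow (3)$ is then short. The $\Sigma$-reduced words span $\Bbbk\langle O\mid\Sigma\rangle$ by a descending induction on the leading word: if a representative contains some $\bar s$ as a factor in its leading monomial, subtract the corresponding $\alpha u s v$ to strictly decrease it, and iterate by well-ordering. Linear independence follows from $(2)$: any nontrivial $\Bbbk$-linear relation among reduced words would give a nonzero $f\in(\Sigma)$ whose leading word $\bar f$ is itself reduced, contradicting $(2)$.

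Finally, for $(3)\Rightarrow (1)$, let $f,g\in\Sigma$ and consider any composition $(f,g)_w$. By construction $(f,g)_w\in(\Sigma)$ and $\overline{(f,g)_w}\prec w$. Using $(3)$, I reduce $(f,g)_w$ by iterated subtractions $\alpha u s v$ with $u\bar s v$ equal to the current leading monomial; since the starting leading monomial is $\prec w$ and each reduction strictly decreases it, every term $u\bar s v$ used satisfies $u\bar s v\prec w$. The final reduced remainder lies in $(\Sigma)$ and is a $\Bbbk$-linear combination of reduced words, hence is zero by $(3)$. This exhibits $(f,g)_w$ as trivial modulo $\Sigma$ and $w$, so $\Sigma$ is a GSB.
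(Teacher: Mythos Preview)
The paper does not actually prove Theorem~\ref{thm:CD-ass}: it is stated as the classical Composition--Diamond Lemma with a reference to \cite{Bok76}, and no argument is given. So there is no ``paper's own proof'' to compare against; your outline is precisely the standard Bokut--Bergman argument that the citation points to.

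Your sketch is essentially correct, but there is one recurring imprecision worth tightening. In both the spanning part of $(2)\Rightarrow(3)$ and in $(3)\Rightarrow(1)$ you describe a reduction process that only attacks the \emph{leading} monomial, yet you then assert that the terminal polynomial is a $\Bbbk$-linear combination of $\Sigma$-reduced words. That does not follow: when the process halts, only the leading word is guaranteed to be reduced, while lower terms may still contain factors $\bar s$. The fix is routine --- either reduce arbitrary non-reduced monomials (not just the leading one), or argue by well-founded induction on $\bar h$: if $\bar h$ is reduced, split it off and recurse on $h-\alpha\bar h$; if not, subtract $\alpha u s v$ and recurse. In $(3)\Rightarrow(1)$ specifically, once the remainder $h\in(\Sigma)$ has $\bar h$ reduced, one still needs the observation that every word $\prec\bar h$ is congruent modulo $(\Sigma)$ to a combination of reduced words $\prec\bar h$, so that the image of $h$ in the quotient is a nontrivial combination of reduced words with nonzero coefficient on $\bar h$, contradicting~(3). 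With this small repair the cyclic proof $(1)\Rightarrow(2)\Rightarrow(3)\Rightarrow(1)$ goes through exactly as you indicate.
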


The same approach works for modules over associative algebras. 
Suppose $O$ is a set of generators of an associative algebra $A$, 
$X$ is a set of generators of a left $A$-module $M$.
Let $\Sigma \subset \Bbbk \langle O\rangle $ be a set of defining 
relations of $A$ and let $S\subset \Bbbk \langle O\rangle \otimes \Bbbk X$ 
be a set of defining relations of $M$.
We will identify 
$\Bbbk \langle O\rangle \otimes \Bbbk X$ with 
a subset 
$\Bbbk \langle O\dot\cup X \rangle_1 $
of
$\Bbbk \langle O\dot\cup X \rangle$ 
that consist of polynomials 
\[
 \sum\limits_i \alpha _iu_ix_i,\quad \alpha_i\in \Bbbk,\ u_i\in O^\#,\ x_i\in X.
\]

Consider the alphabet $Z=O\dot\cup X$ and a monomial order on $Z^*$. 
Obviously, $M$ is a subspace of the associative algebra $A(M)$
generated by $Z$ relative to the defining relations $\Sigma \cup S$ 
along with 
$xa$, $xy$ $x,y\in X$, $a\in O$. Indeed, $A(M)$ is the split null extension 
of $A$ by means of $M$.

\begin{definition}
Let $\Sigma $ be a GSB in $\Bbbk \langle O\rangle $, $A=\Bbbk \langle O\mid \Sigma \rangle $, 
$F(X)$ is the free $A$-module generated by $X$.
A set $S$ of monic polynomials in $\Bbbk \langle O\dot\cup X \rangle _1$ 
is said to be a Gr\"obner---Shirshov basis in $F(X)$ 
if $S\cup\Sigma \cup \{xa, xy\mid x,y\in X,\ a\in O\}$ is a GSB in $\Bbbk \langle O\dot\cup X \rangle $.
\end{definition}

The following statement is equivalent 
to the CD-lemma for modules proved in~\cite{KangLee}.

\begin{thm}\label{thm:CD-mod}
 For a set of monic polynomials $S\subset \Bbbk \langle O\dot\cup X \rangle_1 $ 
 the following statements are equivalent:
\begin{enumerate}
 \item $S$ is a GSB in $F(X)$;
 \item If $f\in \Bbbk \langle O\dot\cup X \rangle_1$ belongs to 
 the $A$-submodule generated by $S$ then either 
 $\bar f = u\bar svx$, $u,v\in O^\#$, $s\in \Sigma $, $x\in X$, 
 or $\bar f = u\bar g$, $u\in O^\#$, $g\in S$;
 \item 
 The set of $(\Sigma \cup S)$-reduced set of words in $\Bbbk \langle O\dot\cup X \rangle_1$
 forms a linear basis of the $A$-module $M$ generated by $X$ relative to the defining relations $S$.
\end{enumerate}
\end{thm}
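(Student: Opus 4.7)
My plan is to reduce everything to Theorem~\ref{thm:CD-ass} applied to the combined alphabet $Z=O\dot\cup X$ with the combined set of relations $T=\Sigma\cup S\cup\{xa, xy\mid x,y\in X,\ a\in O\}$ in $\Bbbk\langle Z\rangle$. By definition, statement~(1) says exactly that $T$ is a GSB in $\Bbbk\langle Z\rangle$, so my task reduces to matching (2) and (3) of the present theorem with the restrictions of (2) and (3) of Theorem~\ref{thm:CD-ass} to $\Bbbk\langle Z\rangle_1$.

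I would begin by classifying leading monomials: $\bar\sigma\in O^*$ for $\sigma\in\Sigma$; $\bar s=ux$ with $u\in O^\#$, $x\in X$ for $s\in S$; and $\bar t=t$ containing an $X$-letter in a non-terminal position for $t\in\{xa,xy\}$. Every word $w\in Z^*$ falls in exactly one of three types: $w\in O^*$; $w$ has exactly one $X$-letter, at the end; or $w$ has an $X$-letter in a non-terminal position. In the first case a factorisation $w=u\bar tv$ with $t\in T$ forces $t\in\Sigma$; in the second case it forces either $t=\sigma\in\Sigma$ with $u\in O^\#$, $v=v'x$, $v'\in O^\#$, $x\in X$, or $t=s\in S$ with $u\in O^\#$ and $v$ empty; in the third case $w$ trivially factors through some $xa$ or $xy$. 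Consequently the $T$-reduced words in $Z^*$ split as the disjoint union of the $\Sigma$-reduced words of $O^*$ and the $(\Sigma\cup S)$-reduced words of $\Bbbk\langle Z\rangle_1$.

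Next I would identify the split null extension $A(M)=\Bbbk\langle Z\mid T\rangle$ with $A\oplus M$ as vector spaces. Since $\{xa,xy\}$ is itself a GSB in $\Bbbk\langle Z\rangle$ (no inclusion or intersection composition is possible among these binomials, as a direct inspection shows), every element of $A(M)$ is represented by an $O^*$-part plus a $\Bbbk\langle Z\rangle_1$-part, giving the direct sum decomposition. Moreover, a short computation using the presentation of $A$ shows that the intersection $(T)\cap\Bbbk\langle Z\rangle_1$ coincides precisely with the $A$-submodule of $F(X)$ generated by $S$.

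The three equivalences now follow. For (1)$\Rightarrow$(2), take $f\in\Bbbk\langle Z\rangle_1$ in the $A$-submodule generated by $S$; then $f\in(T)$, Theorem~\ref{thm:CD-ass}(2) gives $\bar f=u\bar tv$, and type-checking from the first paragraph yields the two alternatives. For (1)$\Leftrightarrow$(3) one applies Theorem~\ref{thm:CD-ass}(1)$\Leftrightarrow$(3) on the enlarged alphabet and uses the bijection between the $T$-reduced basis of $A(M)$ and the pair (basis of $A$, basis of $M$) coming from the direct sum decomposition. For (2)$\Rightarrow$(1), take an arbitrary $f\in(T)$; depending on the type of $\bar f$, apply our (2) to the $\Bbbk\langle Z\rangle_1$-part of $f$, or the GSB property of $\Sigma$ to the $O^*$-part of $f$, or observe that $\bar f$ already contains some $xa$ or $xy$ as a subword. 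The main obstacle I anticipate is the bookkeeping in this last step: the leading monomial $\bar f$ of a sum need not be the leading monomial of any single projection, so one must argue that the type of $\bar f$ forces a nonzero projection of that type. All remaining steps are direct applications of Theorem~\ref{thm:CD-ass}.
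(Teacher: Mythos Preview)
The paper does not give its own proof of this theorem: it simply records that the statement is equivalent to the CD-lemma for modules proved in \cite{KangLee}. Your proposal is therefore not competing with a proof in the paper but rather supplying the reduction that the introduction promises, namely that the module CD-lemma can be ``explicitly deduced from the classical CD-Lemma for associative algebras'' via the split null extension $A(M)=\Bbbk\langle Z\mid T\rangle$.

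Your argument is correct and is exactly the intended mechanism. One remark: the ``obstacle'' you anticipate in the step (2)$\Rightarrow$(1) is not an obstacle at all. The three classes of words---those in $O^*$, those in $O^\#X$, and those containing a non-terminal $X$-letter---partition $Z^*$. Hence for any $f\in\Bbbk\langle Z\rangle$ with decomposition $f=f_O+f_1+f_K$ according to this partition, the leading monomial $\bar f$ lies in exactly one class and is automatically the leading monomial of the corresponding summand (the other summands contribute no monomials of that class). Thus if $\bar f\in O^*$ then $f_O\ne 0$, $\bar f=\overline{f_O}$, and $f_O\in(\Sigma)$ since $f$ vanishes in $A(M)=A\oplus M$; the GSB hypothesis on $\Sigma$ finishes this case. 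The case $\bar f\in O^\#X$ is handled symmetrically by your hypothesis~(2), and the third case is trivial. No further bookkeeping is required.
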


\section{Conformal algebras}

Conformal algebra \cite{Kac98} is a linear space $C$ equipped with a linear map $\partial : C\to C$ 
and a family of bilinear products $(\cdot \oo{n}\cdot )$, $n\in \mathbb Z_+$, such that 
the following axioms hold: for every $a,b\in C$
\begin{gather}
 a\oo{n} b = 0 \quad \text{for almost all $n\in \mathbb Z_+$};
                       \label{eq:Locality}           \\
 \partial a \oo{n} b = -na\oo{n-1} b,\quad a\oo{n}\partial b = \partial(a\oo{n} b) + na\oo{n-1} b.
                       \label{eq:3/2-linearity}
\end{gather}
Locality function on $C$ is a map 
$N_C: C\times C \to \mathbb Z_+$, where $N_C(a,b)$ is equal to the minimal $N$ such that 
$a\oo{n} b=0$ for all $n\ge N$.

If 
\[
 a\oo{n} (b\oo{m} c ) = \sum\limits_{s\ge 0} (-1)^s \binom{n}{s} (a\oo{n-s} b)\oo{m+s} c 
\]
for all $a,b,c\in C$ and $n,m\in \mathbb Z_+$ then 
$C$ is said to be associative. 

Axiom \eqref{eq:Locality} shows the class of associative conformal algebras is not a variety 
in the sense of universal algebra. However, for a given set $X$ of generators and for a fixed 
function $N:X\times X \to \mathbb Z_+$ there exists 
a free object in the class of associative conformal algebras $C$ generated by $X$ such that 
$N_C(a,b)\le N(a,b)$ for $a,b\in X$ \cite{Ro99}. This free algebra is unique up to isomorphism, 
let us denote it $\Conf(X,N)$. As shown in \cite{Ro99}, $\Conf(X,N)$ has a 
linear basis which consists of words
\[
\begin{gathered}
 \partial^{s} (a_1\oo{n_1} (a_2 \oo{n_2}  \dots (a_k \oo{n_k} a_{k+1})\dots )), \\
 s\ge 0,\ k\ge 1, \ a_i\in X,\ 0\le n_i<N(a_{i},a_{i+1}).
\end{gathered}
\]

Following \cite{Kac99}, introduce the family of operations 
\[
 \{a\oo{n} b\} = \sum\limits_{s\ge 0} (-1)^{n+s} \partial^{(s)} (a\oo{n+s} b),\quad a,b\in C,\ n\in \mathbb Z_+.
\]
As shown in \cite{Kac99}, in an associative conformal algebra we have 
\[
\begin{gathered}
 \{\partial a\oo{n} b\} = \partial\{a\oo{n} b\} + n\{a\oo{n-1} b\}, \quad \{a\oo{n} \partial b\} = -n\{a\oo{n-1} b\},\\
 a\oo{n}\{b\oo{m} c\} = \{(a\oo{n} b)\oo{m} c\}.
\end{gathered}
\]

\section{Module construction of the free associative conformal algebra}\label{eq:Sect4}

Let $(X, \le )$ be a well-ordered set, and $N: X\times X\to \mathbb Z_+$ be an arbitrary function.
Consider the set of symbols
\[
 O=\{\partial, L_n^a, R_n^a \mid n\in \mathbb Z_+,\ a\in X \}
\]
Define a monomial order $\preceq $ on $O^*$ in the following way:
compare two words first by their degree in 
the variables $R_n^a$, then by deg-lex order assuming
$\partial <L_n^a<R_m^b$, 
$L_n^a <L_m^b$ if $n<m$ or $n=m$ and $a<b$; similarly for $R_n^a$, $R_m^b$.

Denote by $A(X)$ the associative algebra generated by $O$ relative to the following 
defining relations:
 \begin{gather}
 L_n^a\partial - \partial L_n^a - nL_{n-1}^a,  \label{eq:Alg_A:Relations1} \\
 R_n^a\partial - \partial R_n^a - nR_{n-1}^a,  \label{eq:Alg_A:Relations2} \\
 R_m^bL_n^a - L_n^aR_m^b,   \label{eq:Alg_A:Relations3}
\end{gather}
$a,b\in X$, $n,m \in \mathbb Z_+$.

\begin{lem}\label{lem:BGS_A}
 Polynomials \eqref{eq:Alg_A:Relations1}--\eqref{eq:Alg_A:Relations3} form a GSB
 with respect to the order $\preceq $ on~$O^*$.
\end{lem}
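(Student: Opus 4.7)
The plan is to apply the CD-Lemma for associative algebras (Theorem~\ref{thm:CD-ass}) directly. First I would unwind the chosen monomial order to identify the principal monomials: since $\partial < L_n^a < R_m^b$ and comparison is made first by $R$-degree, the leading terms of \eqref{eq:Alg_A:Relations1}--\eqref{eq:Alg_A:Relations3} are $L_n^a\partial$, $R_n^a\partial$, and $R_m^bL_n^a$ respectively (the remaining monomials in each polynomial have either strictly smaller $R$-degree in the case of \eqref{eq:Alg_A:Relations3} or are deg-lex smaller in the cases \eqref{eq:Alg_A:Relations1}, \eqref{eq:Alg_A:Relations2}). All three leading monomials have length $2$, and none is a factor of another, so there are no compositions of inclusion.

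Next I would enumerate the compositions of intersection by looking at all ways the suffix of one leading monomial can coincide with the prefix of another. The suffixes of $L_n^a\partial$ and $R_n^a\partial$ are $\partial$, which never begins a leading monomial, so no overlap starts there. The suffix $L_n^a$ of $R_m^bL_n^a$ on the other hand coincides with the prefix of $L_n^a\partial$ for matching $(n,a)$, yielding the single class of compositions with $w = R_m^bL_n^a\partial$. There is no mirror overlap with $R_{n'}^{a'}\partial$ (no leading monomial begins with $R$ followed by $\partial$) and no self-overlap among the three relations.

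The only nontrivial step is therefore the reduction of
\[
(R_m^bL_n^a-L_n^aR_m^b,\ L_n^a\partial-\partial L_n^a-nL_{n-1}^a)_w
= -L_n^aR_m^b\partial + R_m^b\partial L_n^a + nR_m^bL_{n-1}^a.
\]
I would now rewrite each summand by applying \eqref{eq:Alg_A:Relations1}--\eqref{eq:Alg_A:Relations3} as rules: push each $\partial$ past an $L$ or $R$ using \eqref{eq:Alg_A:Relations1}, \eqref{eq:Alg_A:Relations2} and swap each $R_\cdot L_\cdot$ using \eqref{eq:Alg_A:Relations3}, continuing until the monomials are $\Sigma$-reduced. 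A short bookkeeping check shows that the $\partial L_n^a R_m^b$ contributions cancel, the $mL_n^a R_{m-1}^b$ contributions cancel, and the $nL_{n-1}^a R_m^b$ contributions cancel, so the composition reduces to $0$ modulo $\Sigma$ and $w$.

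Having verified triviality of the single essential composition, Theorem~\ref{thm:CD-ass} gives the lemma. The main obstacle is really just organizational: making sure no other overlaps are overlooked (in particular, that no overlap of $R_m^bL_n^a$ with itself is possible because its prefix $R_m^b$ never appears as a suffix of a leading monomial) and that the rewriting in the composition is carried out with all intermediate monomials strictly below $w = R_m^bL_n^a\partial$ in the order, which follows because each rewrite either decreases $R$-degree or decreases deg-lex order.
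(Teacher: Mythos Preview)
Your proof is correct and follows exactly the paper's approach: identify that the only overlap is between \eqref{eq:Alg_A:Relations3} and \eqref{eq:Alg_A:Relations1} at $w=R_m^bL_n^a\partial$, and check that this composition reduces to zero. One small inaccuracy in your write-up: in \eqref{eq:Alg_A:Relations3} both monomials have the same $R$-degree, so the leading term $R_m^bL_n^a$ is singled out by the deg-lex comparison (first letter $R>L$), not by $R$-degree as you wrote; this does not affect the argument.
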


\begin{proof}
 The only composition here is the composition of intersection of \eqref{eq:Alg_A:Relations1}
 and \eqref{eq:Alg_A:Relations3}. It is straightforward to check that this composition is trivial.
\end{proof}

Denote by $F(X)$ the free (left) $A(X)$-module generated by $X$. Lemma~\ref{lem:BGS_A} 
implies that the linear base of $F(X)$ consists of words 
\begin{equation}\label{eq:Mod_F:basis}
 \partial^s L_{n_1}^{a_1}\dots L_{n_k}^{a_k} R_{m_1}^{b_1}\dots R_{m_p}^{b_p}c,
 \quad 
 s,k,p,n_i,m_j\in \mathbb Z_+, \ a_i,b_j,c\in X.
\end{equation}

\begin{definition}\label{defn:NormalWord}
Let us call a word of type \eqref{eq:Mod_F:basis} {\em normal} if 
$p=0$, $n_i<N(a_i,a_{i+1})$ for $i=1,\dots, k-1$, 
and
$n_k <N(a_k,c)$.
Denote by $B(X,N)$ the set of all normal words, and use
 $B_0(X,N)$ for the set of $\partial $-free 
normal words.
\end{definition}

Let $M(X,N)$ stand for the $A(X)$-module generated by $X$ relative to the following 
relations:
\begin{gather}
 L_n^a b, \quad a,b\in X,\ n\ge N(a,b),                       \label{eq:Mod_M:Locality}\\
 R_n^b a - (-1)^n\sum\limits_{s=0}^{N(a,b)-n-1} \partial^{(s)} L_{n+s}^a b,
    \quad a,b\in X,\ n\in \mathbb Z_+.                        \label{eq:Mod_M:RightMul}
\end{gather}
This module is the main object of study in this section.
The following statement explains the origin of $A(X)$ and $M(X,N)$.

\begin{prop}\label{prop:FreeConfMod}
 The free associative conformal algebra $\Conf(X,N)$ 
 is an $A(X)$-module which is a homomorphic image of $M(X,N)$.
\end{prop}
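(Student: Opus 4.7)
The plan is to endow $\Conf(X,N)$ with an $A(X)$-module structure such that each generator $a\in X$ acts on itself as itself, and then observe that the canonical $A(X)$-module map $F(X)\to \Conf(X,N)$ extending the identity on $X$ descends to a surjection from $M(X,N)$.

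First I would define the action of the generators of $A(X)$ on $\Conf(X,N)$: $\partial$ acts as the intrinsic derivation; $L_n^a$ acts as left multiplication $u\mapsto a\oo{n}u$; and $R_n^b$ is defined first on a generator $c\in X$ by the formula
\[
 R_n^b c \;:=\; (-1)^n\sum_{s\ge 0}\partial^{(s)}(c\oo{n+s} b),
\]
which is a finite sum thanks to locality \eqref{eq:Locality}, and then extended to the whole basis of $\Conf(X,N)$ by imposing the rules $R_n^b L_m^a = L_m^a R_n^b$ and $R_n^b\partial = \partial R_n^b + nR_{n-1}^b$. Because every basis element of $\Conf(X,N)$ has the form $\partial^s L_{n_1}^{a_1}\cdots L_{n_k}^{a_k}a_{k+1}$, this dictates $R_n^b$ uniquely as a linear operator.

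The main task is to verify that the so-defined operators satisfy the defining relations \eqref{eq:Alg_A:Relations1}--\eqref{eq:Alg_A:Relations3} of $A(X)$. Relations \eqref{eq:Alg_A:Relations1} are immediate from sesqui-linearity \eqref{eq:3/2-linearity}, while \eqref{eq:Alg_A:Relations2} and \eqref{eq:Alg_A:Relations3} hold on generators by construction. To propagate \eqref{eq:Alg_A:Relations3} to arbitrary elements, one has to check the identity
\[
 R_n^b(a\oo{m}u) \;=\; a\oo{m}(R_n^b u)
\]
on a basis element $u=a_1\oo{n_1}(\cdots(a_k\oo{n_k}a_{k+1})\cdots)$; unfolding the left side requires reducing $a\oo{m}u$ to normal form via the conformal associativity axiom, after which both sides match because $R_n^b$ was constructed to commute with every left multiplication at the inner-most position. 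This bookkeeping, together with the identities for $\{a\oo{n}b\}$ recalled at the end of Section~3, is the main technical content of the argument.

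Given the module structure, the inclusion $X\hookrightarrow \Conf(X,N)$ extends uniquely to an $A(X)$-module homomorphism $\varphi:F(X)\to \Conf(X,N)$. The relations \eqref{eq:Mod_M:Locality} are sent to zero by \eqref{eq:Locality} applied to pairs of generators, and the relations \eqref{eq:Mod_M:RightMul} are sent to zero by the very definition of $R_n^b$ on $X$. Hence $\varphi$ descends to a map $\bar\varphi:M(X,N)\to \Conf(X,N)$, and surjectivity is immediate because the basis element $\partial^s(a_1\oo{n_1}(\cdots(a_k\oo{n_k}a_{k+1})\cdots))$ of $\Conf(X,N)$ equals $\bar\varphi(\partial^s L_{n_1}^{a_1}\cdots L_{n_k}^{a_k}a_{k+1})$. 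The main obstacle is the consistency check for $R_n^b$: showing that the recursion forced by the commutation rules produces a well-defined operator compatible with the conformal structure on $\Conf(X,N)$; everything else is straightforward.
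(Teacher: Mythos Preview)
Your argument is correct, but the paper takes a shorter path that sidesteps exactly the obstacle you single out. Rather than defining $R_n^b$ only on generators $c\in X$ and then extending by the commutation rules, the paper sets $R_n^b u := \{u\oo{n} b\}$ for \emph{every} $u\in \Conf(X,N)$, using the brace operation already introduced in Section~3. With this global definition, the relations \eqref{eq:Alg_A:Relations2} and \eqref{eq:Alg_A:Relations3} of $A(X)$ are immediate consequences of the identities $\{\partial u\oo{n} b\} = \partial\{u\oo{n} b\} + n\{u\oo{n-1} b\}$ and $a\oo{n}\{u\oo{m} b\} = \{(a\oo{n} u)\oo{m} b\}$, so no recursion and no consistency check are needed. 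Your formula for $R_n^b c$ is precisely $\{c\oo{n} b\}$, so the operator you are building by recursion coincides with the one the paper writes down in closed form; the ``main obstacle'' you anticipate is thus a theorem (the quoted identities from \cite{Kac99}) rather than something to be verified from scratch. Both routes arrive at the same module structure; the paper's simply avoids the bookkeeping by recognising the brace operation up front.
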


\begin{proof}
 The action of $A(X)$ on $\Conf(X,N)$ is given by 
 \[
  L_n^a u = a\oo{n} u,\quad R_n^a u = \{u\oo{n} a\}
 \]
for $a\in X$, $u\in \Conf(X,N)$, $n\in \mathbb Z_+$,  $\partial $ acts naturally. 
It is easy to see that these rules are compatible with the defining relations of $A(X)$,
as well as \eqref{eq:Mod_M:Locality}, \eqref{eq:Mod_M:RightMul} hold. 
Obviously, $\Conf(X,N)$ as an $A(X)$-module is generated by~$X$. 
\end{proof}

Let us extend the order $\preceq $ to the set of words $O^*\cup O^\#X$ which occur in the computation
of compositions in the free $A(X)$-module generated by $X$.
For $ux,vy \in O^\#X$, where $u,v\in O^\#$, $a,y\in X$, assume $ux\prec vy$ if and only if $u\prec v$ or $u=v$ and $x<y$.
Moreover, set $O^*\prec O^\#X$. 

\begin{thm}\label{thm:Mod_M:BGS}
 Gr\"obner---Shirshov basis of the $A(X)$-module $M(X,N)$
 consists of 
 \eqref{eq:Mod_M:Locality}, \eqref{eq:Mod_M:RightMul} and
\begin{equation}\label{eq:Mod_M:Locality_ex}
L_n^aL_m^bu +\sum\limits_{q\ge 1} (-1)^q \binom{n}{q} L_{n-q}^aL_{m+q}^b u,
\end{equation}
where $a,b\in X$, $n\ge N(a,b)$, $m\in \mathbb Z_+$, $u\in B_0(X,N)$. 
\end{thm}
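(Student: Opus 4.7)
The plan is to apply the equivalence $(1)\Leftrightarrow(3)$ of Theorem~\ref{thm:CD-mod}: $S=\{\eqref{eq:Mod_M:Locality},\eqref{eq:Mod_M:RightMul},\eqref{eq:Mod_M:Locality_ex}\}$ is a GSB in $F(X)$ if and only if the $(\Sigma\cup S)$-reduced words in $\Bbbk\langle O\dot\cup X\rangle_1$ form a linear basis of $M(X,N)$. First I would check that these reduced words are exactly $B(X,N)$: $\Sigma$-irreducibility leaves the shape $\partial^s L_{n_1}^{a_1}\cdots L_{n_k}^{a_k} R_{m_1}^{b_1}\cdots R_{m_p}^{b_p} c$; irreducibility under \eqref{eq:Mod_M:RightMul} forces $p=0$; irreducibility under \eqref{eq:Mod_M:Locality} forces $n_k<N(a_k,c)$; and irreducibility under \eqref{eq:Mod_M:Locality_ex}, applied from right to left along the word, forces $n_i<N(a_i,a_{i+1})$ for every $i<k$. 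This matches Definition~\ref{defn:NormalWord} exactly.

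Linear independence of $B(X,N)$ in $M(X,N)$ is handed to us by Proposition~\ref{prop:FreeConfMod}: the $A(X)$-module surjection $M(X,N)\twoheadrightarrow \Conf(X,N)$ sends the normal word $\partial^s L_{n_1}^{a_1}\cdots L_{n_k}^{a_k}c$ to the Roitman basis element $\partial^s(a_1\oo{n_1}(\cdots(a_k\oo{n_k}c)\cdots))$ of $\Conf(X,N)$. Consequently the only nontrivial task is to establish that $B(X,N)$ \emph{spans} $M(X,N)$, which in turn reduces to verifying that the polynomials \eqref{eq:Mod_M:Locality_ex} lie in the $A(X)$-submodule of $F(X)$ generated by \eqref{eq:Mod_M:Locality} and \eqref{eq:Mod_M:RightMul}. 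Once this is known, a standard deg-lex termination argument reduces every element of $M(X,N)$ to a $\Bbbk$-combination of normal words, and the surjection onto $\Conf(X,N)$ becomes an isomorphism.

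To produce \eqref{eq:Mod_M:Locality_ex} from the two defining families I would combine \eqref{eq:Alg_A:Relations3} with \eqref{eq:Mod_M:Locality}: for $n\geq N(a,\alpha)$ the commutation $L_n^a R_m^c\alpha = R_m^c L_n^a\alpha$ gives $L_n^a R_m^c\alpha\equiv 0$ in $M(X,N)$. Expanding the left-hand side by \eqref{eq:Mod_M:RightMul} and pushing $L_n^a$ past each $\partial^{(s)}$ via the iterated commutator $L_n^a\partial^{(s)}=\sum_q\binom{n}{q}\partial^{(s-q)}L_{n-q}^a$ (a consequence of \eqref{eq:Alg_A:Relations1}) produces a relation of the form $\sum_{r\geq 0}\partial^{(r)}E(m+r)\equiv 0$, where each $E(j)$ is a fixed binomial combination of $L_{n-q}^a L_{j+q}^\alpha c$. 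A downward induction on $j$, based at $j\geq N(\alpha,c)$ where $E(j)\equiv 0$ directly from \eqref{eq:Mod_M:Locality}, strips off the $\partial^{(r)}$ factors and yields $E(m)\equiv 0$ for every $m$; this is exactly \eqref{eq:Mod_M:Locality_ex} in the case where $u$ is a single generator. A secondary induction on the length of $u$ handles general normal $u$. The main obstacle is the sign-and-binomial bookkeeping: the coefficients coming out of \eqref{eq:Mod_M:RightMul}, out of the $L\partial$-commutation, and out of the downward induction must assemble precisely into the $(-1)^q\binom{n}{q}$ pattern displayed in \eqref{eq:Mod_M:Locality_ex}, and this is where the specific form of the extended locality relation is forced.
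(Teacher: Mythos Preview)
Your proposal is correct and follows essentially the same route as the paper: the paper also derives \eqref{eq:Mod_M:Locality_ex} (for $u=c\in X$) from the composition of \eqref{eq:Alg_A:Relations3} with \eqref{eq:Mod_M:Locality}, expanding $R_m^c b$ via \eqref{eq:Mod_M:RightMul}, pushing $L_n^a$ past the divided powers of $\partial$, and running a downward induction on $m$, after which it invokes Roitman's basis of $\Conf(X,N)$ and the equivalence (1)$\Leftrightarrow$(3) of Theorem~\ref{thm:CD-mod} exactly as you outline. The only difference is that the paper does not make explicit the passage from $u=c$ to general $u\in B_0(X,N)$, so your remark about a secondary induction on the length of $u$ is a mild refinement rather than a different approach.
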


\begin{proof}
Let us show that relations \eqref{eq:Mod_M:Locality_ex} 
follow from the defining relations 
\eqref{eq:Mod_M:Locality}, \eqref{eq:Mod_M:RightMul}.
For $u=c\in X$, consider the compositions of intersection of
\[
 f=R_m^cL_n^a - L_n^aR_m^c,\quad g= L_n^ab
\]
for $n\ge N(a,b)$, $m\in \mathbb Z_+$:
\begin{multline}\nonumber
(f,g)_{R_m^cL_n^ab} =  (R_m^cL_n^a - L_n^aR_m^c)b - R_m^c(L_n^ab) \\
 \equiv (-1)^{m+1} \sum\limits_{s=0}^{N(b,c)-1} (-1)^s L_n^a\partial^{(s)}L_{m+s}^b c \\
 \equiv (-1)^{m+1} \sum\limits_{s=0}^{N(b,c)-1}
   \sum\limits_{t\ge 0}
   (-1)^s \binom{n}{t} \partial^{(s-t)} L_{n-t}^a L_{m+s}^b c.
\end{multline}
These compositions are trivial for $m\ge N(b,c)$, for $m=N(b,c)-1$ we obtain 
$(f,g)_{R_m^cL_n^ab}$ 
to be a multiple of
$L_{n}^a L_{m}^b c $. 
For smaller $m$, proceed by induction. Suppose 
\eqref{eq:Mod_M:Locality_ex} holds for $u=c$ and all $m>p$. 
Then 
\begin{multline}\label{eq:CompInter}
 (f,g)_{R_p^cL_n^ab}
\equiv 
 (-1)^{p+1} \sum\limits_{s=0}^{N(b,c)-1}
   \sum\limits_{t\ge 0}  (-1)^s  \binom{n}{t} \partial^{(s-t)} L_{n-t}^a L_{p+s}^b c  \\
=
  (-1)^{p+1} \bigg( L_{n}^a L_{p}^b c
   +
  \sum\limits_{s=1}^{N(b,c)-1}
  (-1)^s\bigg( \partial^{(s)} L_{n}^a L_{p+s}^b c +
   \sum\limits_{t\ge 1}   \binom{n}{t} \partial^{(s-t)} L_{n-t}^a L_{p+s}^b c
   \bigg )
   \bigg )  \\
\equiv 
  (-1)^{p+1} \bigg( L_{n}^a L_{p}^b c
   +
  \sum\limits_{s=1}^{N(b,c)-1}
  \bigg( \partial^{(s)} \sum\limits_{q\ge 1}(-1)^{q+s+1} \binom{n}{q} L_{n-q}^a L_{p+s+q}^b c \\
+
   \sum\limits_{t\ge 1}   (-1)^s\binom{n}{t} \partial^{(s-t)} L_{n-t}^a L_{p+s}^b c
   \bigg )
   \bigg )   \\
= 
  (-1)^{p+1} \bigg( L_{n}^a L_{p}^b c
   +
  \sum\limits_{s,q\ge 1}
   (-1)^{q+s+1} \binom{n}{q} \partial^{(s)}  L_{n-q}^a L_{p+s+q}^b c \\
+
   \sum\limits_{r\ge 0,t\ge 1}   (-1)^{r+t}\binom{n}{t} \partial^{(r)} L_{n-t}^a L_{p+r+t}^b c
   \bigg )   \\
=
 (-1)^{p+1} \bigg( L_{n}^a L_{p}^b c
   +
  \sum\limits_{q\ge 1}
   (-1)^{q} \binom{n}{q} L_{n-q}^a L_{p+q}^b c
   \bigg ).
\end{multline}
Here we have applied substitution $r=s-t$ for the last sum in the right-hand side of \eqref{eq:CompInter}. 
Therefore, relation \eqref{eq:Mod_M:Locality_ex} holds for $m=p$ as well.

Denote by $\Sigma(X,N)$ the set of relations 
\eqref{eq:Mod_M:Locality}, \eqref{eq:Mod_M:RightMul}, and \eqref{eq:Mod_M:Locality_ex}.
Note that the set of normal words is exactly the set of $\Sigma(X,N)$-reduced words.
Images of these words under the homomorphism from Proposition~\ref{prop:FreeConfMod} 
are linearly independent in 
$\Conf(X,N)$ as shown in \cite{Ro99}.
Hence, $B(X,N) $ is a linear basis of $M(X,N)$ and thus $\Sigma(X,N)$ 
is a GSB. 
\end{proof}

\begin{remark}
 It is not hard to check in a straightforward way that 
 $\Sigma(X,N)$ is closed with respect to compositions.
\end{remark}

\begin{cor}
 Free associative conformal algebra $\Conf(X,N)$ as $A(X)$-module is isomorphic to $M(X,N)$. 
 The set of normal words $B(X,N)$ is a linear basis of the module $M(X,N)$.
\end{cor}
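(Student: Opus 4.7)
The plan is to assemble the result from the pieces already established, viewing the Corollary as a direct consequence of Theorem~\ref{thm:Mod_M:BGS} together with Proposition~\ref{prop:FreeConfMod} and the CD-Lemma for modules (Theorem~\ref{thm:CD-mod}).

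First, I would record the surjective $A(X)$-module homomorphism $\varphi\colon M(X,N)\to \Conf(X,N)$ supplied by Proposition~\ref{prop:FreeConfMod}: the $A(X)$-action on $\Conf(X,N)$ compatible with the defining relations \eqref{eq:Mod_M:Locality} and \eqref{eq:Mod_M:RightMul} makes $\Conf(X,N)$ into a quotient of $M(X,N)$, and surjectivity follows because $X$ generates $\Conf(X,N)$ as a conformal algebra and hence as an $A(X)$-module.

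Next, I would invoke Theorem~\ref{thm:Mod_M:BGS}, which asserts that $\Sigma(X,N)$ is a Gr\"obner---Shirshov basis for $M(X,N)$. By the equivalence (1) $\Leftrightarrow$ (3) in Theorem~\ref{thm:CD-mod}, the set of $\Sigma(X,N)$-reduced words forms a $\Bbbk$-basis of $M(X,N)$. A direct inspection of the leading monomials of \eqref{eq:Mod_M:Locality}, \eqref{eq:Mod_M:RightMul}, and \eqref{eq:Mod_M:Locality_ex} (relative to the order $\preceq$ on $O^*\cup O^\# X$) shows that the $\Sigma(X,N)$-reduced words in $\Bbbk\langle O\dot\cup X\rangle_1$ are exactly the normal words of Definition~\ref{defn:NormalWord}: the relation \eqref{eq:Mod_M:RightMul} eliminates any occurrence of an $R_m^b$ letter, while \eqref{eq:Mod_M:Locality} and \eqref{eq:Mod_M:Locality_ex} enforce the bounds $n_i<N(a_i,a_{i+1})$ and $n_k<N(a_k,c)$. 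Therefore $B(X,N)$ is a linear basis of $M(X,N)$.

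Finally, I would apply $\varphi$ to this basis. By Roitman's description of $\Conf(X,N)$, the images $\varphi(w)$ of normal words $w\in B(X,N)$ coincide, up to the identification $L_n^a = a\oo{n}(\cdot)$ and the $\partial$-action, with Roitman's basis of $\Conf(X,N)$. In particular, the images are $\Bbbk$-linearly independent in $\Conf(X,N)$, so $\varphi$ sends a basis of $M(X,N)$ injectively onto a basis of $\Conf(X,N)$ and is thus an $A(X)$-module isomorphism. The only potential obstacle is this last matching step; once it is recognized that the normal words have been defined precisely so as to reproduce Roitman's basis, the Corollary follows immediately.
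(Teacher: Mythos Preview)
Your proposal is correct and matches the paper's approach: the corollary is stated without proof, being an immediate consequence of the argument already given inside the proof of Theorem~\ref{thm:Mod_M:BGS}, where it is shown that the $\Sigma(X,N)$-reduced words are precisely $B(X,N)$ and that their images under the homomorphism of Proposition~\ref{prop:FreeConfMod} coincide with Roitman's basis of $\Conf(X,N)$. Your write-up simply makes this reasoning explicit, invoking the direction (1)$\Rightarrow$(3) of Theorem~\ref{thm:CD-mod} rather than (3)$\Rightarrow$(1) as in the paper's proof of the theorem itself, but the content is the same.
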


\begin{cor}
 Ideals of $\Conf(X,N)$ are exactly $A(X)$-submodules of $M(X,N)$. 
\end{cor}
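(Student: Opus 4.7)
The plan is to pass via the isomorphism $\Conf(X,N)\cong M(X,N)$ of the previous corollary to the $A(X)$-action on $\Conf(X,N)$ given by $L_n^a\cdot = a\oo{n}\cdot$, $R_n^a\cdot=\{\cdot\oo{n} a\}$, and the natural $\partial$, and then to verify that the $A(X)$-stable subspaces coincide with two-sided conformal ideals. One direction is immediate: a conformal ideal $I$ is $\partial$-stable and closed under both $c\oo{n}\cdot$ and $\cdot\oo{n} c$ for every $c\in\Conf(X,N)$, so it is closed under $L_n^a$ for $a\in X$ and, since $R_n^a\cdot$ is a $\partial$-polynomial in the right products $\cdot\oo{n+s} a$, also under $R_n^a$; hence $I$ is an $A(X)$-submodule.

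For the converse, let $I$ be an $A(X)$-submodule. First I would promote closure under $R_n^a$ together with $\partial$-invariance to closure under the ``plain'' right products $\cdot\oo{n} a$ for $a\in X$. The defining identity
\[
\{v\oo{n} a\}=\sum_{s\ge 0}(-1)^{n+s}\partial^{(s)}(v\oo{n+s} a)
\]
is a finite sum (by locality) that is upper triangular in $n$, so a descending induction on $n$ expresses each $v\oo{n} a$ as a $\partial$-linear combination of elements $\{v\oo{k} a\}\in I$ with $k\ge n$.

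Next I would induct on the structure of the spanning monomials $u=\partial^{s}(a_1\oo{n_1}(\dots(a_k\oo{n_k} a_{k+1})\dots))$ of $\Conf(X,N)$, with $a_i\in X$, to prove $u\oo{n} v,\ v\oo{n} u\in I$ for every $v\in I$. The case $u\in X$ is the hypothesis, and the $\partial$-step is handled by axiom \eqref{eq:3/2-linearity}. For $u=a\oo{m} u'$ with $a\in X$, inverting the associativity axiom yields
\[
(a\oo{m} u')\oo{n} v = a\oo{m}(u'\oo{n} v)-\sum_{s\ge 1}(-1)^s\binom{m}{s}(a\oo{m-s} u')\oo{n+s} v,
\]
and a secondary induction on $m$ (the first summand reducing to the structural induction hypothesis on $u'$, the tail summands to the smaller value of $m-s$) closes the argument. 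The right-handed version uses the intertwining identity $a\oo{n}\{v\oo{m} b\}=\{(a\oo{n} v)\oo{m} b\}$ stated after \eqref{eq:3/2-linearity}.

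The main technical point is the bookkeeping of the nested induction: one proceeds by outer structural induction on the word $u$ and by inner descending induction on $m$ in the associativity rewrite, while locality ensures that every sum stays finite. Once this ordering is fixed, the individual steps are direct invocations of the associativity axiom and the formula defining $\{\cdot\oo{n}\cdot\}$.
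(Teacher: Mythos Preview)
Your argument is correct and is a fleshed-out version of the paper's two-line sketch. The paper simply asserts that an ideal is closed under $\partial$, $L_n^a$, $R_n^a$ (hence is an $A(X)$-submodule), and that conversely an $A(X)$-submodule is $\partial$-invariant and closed under left and right conformal multiplication by generators $a\in X$; it treats the passage from ``closed under multiplication by the generating set $X$'' to ``two-sided ideal'' as routine and does not write it out. You supply precisely those steps the paper omits: the triangular inversion from $R_n^a$ to the plain products $\cdot\oo{n}a$, and the propagation from $X$ to all of $\Conf(X,N)$ by structural induction through the associativity identity.

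One minor remark: for the right-handed step with $u=a\oo{m}u'$ you do not actually need the intertwining identity you cite. The conformal associativity axiom already gives
\[
v\oo{n}(a\oo{m}u')=\sum_{s\ge 0}(-1)^s\binom{n}{s}(v\oo{n-s}a)\oo{m+s}u',
\]
and since $v\oo{n-s}a\in I$ by your inversion step and $I\oo{k}u'\subseteq I$ by the structural hypothesis on the shorter word $u'$, each summand lies in $I$ directly.
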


\begin{proof}
Every (two-sided) ideal of $\Conf(X,N)$ is obviously closed with respect to the 
action of $\partial$, $L_n^a$, and $R_n^a$. Conversely, every $A(X)$-submodule 
of $M(X,N)$ is $\partial$-invariant and closed relative to left and right 
conformal multiplications on $a\in X$.
\end{proof}

Let us summarize the results above to state the CD-Lemma for associative 
conformal algebras.

\begin{definition}
 Suppose $S$ is a set of elements in $\Conf(X,N)$. Identify 
 $S$ with a set of linear combinations of normal words $B(X,N)$ in $F(X)$. 
 If $S$ together with $\Sigma(X,N)$
 is a Gr\"obner---Shirshov basis (GSB) in the $A(X)$-module $F(X)$ then we say that $S$ is a GSB in $\Conf(X,N)$.
\end{definition}

By abuse of terminology, we say a GSB $S$ in $\Conf(X,N)$ is a GSB of ideal generated by $S$ 
in $\Conf(X,N)$.

\begin{thm}\label{thm:CDLemma}
The following statements are equivalent:
\begin{itemize}
 \item $S$ is a GSB in $\Conf(X,N)$;
 \item The set of $S$-reduced normal words forms a linear basis of $\Conf(X,N\mid S)$. 
\end{itemize}
\end{thm}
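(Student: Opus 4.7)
The plan is to deduce the equivalence as a direct corollary of Theorem~\ref{thm:CD-mod} (the CD-Lemma for modules) applied to the GSB $\Sigma(X,N)$ of the $A(X)$-module $M(X,N)$, using the identification $M(X,N)\cong \Conf(X,N)$ already established in this section.

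First I would unfold the definition immediately preceding the theorem: $S$ being a GSB in $\Conf(X,N)$ means, by definition, that $S\cup \Sigma(X,N)$ is a GSB in the free $A(X)$-module $F(X)$. Since $\Sigma(X,N)$ is itself a GSB by Theorem~\ref{thm:Mod_M:BGS}, and since the preceding corollary identifies ideals of $\Conf(X,N)$ with $A(X)$-submodules of $M(X,N)$, I obtain
\[
\Conf(X,N\mid S) \;\cong\; M(X,N)/\langle S\rangle_{A(X)} \;=\; F(X)\big/\bigl\langle \Sigma(X,N)\cup S\bigr\rangle_{A(X)}.
\]
This puts both bullet points of the statement into the purely module-theoretic setting of Theorem~\ref{thm:CD-mod}.

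Next I would match the two notions of reducedness. A direct inspection of the leading terms of the relations \eqref{eq:Mod_M:Locality}, \eqref{eq:Mod_M:RightMul}, \eqref{eq:Mod_M:Locality_ex} shows that a word of the form \eqref{eq:Mod_F:basis} in $F(X)$ is $\Sigma(X,N)$-reduced iff it satisfies exactly the conditions of Definition~\ref{defn:NormalWord}, i.e.\ iff it belongs to $B(X,N)$. Consequently a word in $F(X)$ is $\bigl(\Sigma(X,N)\cup S\bigr)$-reduced iff it is a normal word that is in addition $S$-reduced, which is exactly the notion of $S$-reduced normal word used in the theorem. With this dictionary, the equivalence of the two bullet points is precisely the equivalence (1)$\Leftrightarrow$(3) of Theorem~\ref{thm:CD-mod}.

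The main (and only mild) obstacle is verifying that the identification of $s\in \Conf(X,N)$ with its expansion in the basis $B(X,N)\subset F(X)$ respects leading terms under the extended monomial order on $O^*\cup O^\#X$ defined just before Theorem~\ref{thm:Mod_M:BGS}: I need to check that the leading normal word of $s$ in $\Conf(X,N)$ coincides with $\bar s$ computed in $F(X)$. This follows directly from the stipulation $O^*\prec O^\#X$ together with the lexicographic rule on $O^\#X$, but it has to be stated explicitly before Theorem~\ref{thm:CD-mod} can be applied verbatim to conclude the proof.
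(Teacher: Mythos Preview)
Your proposal is correct and is exactly the argument the paper intends: the theorem is stated without a separate proof, as an explicit summary (``Let us summarize the results above\ldots'') of Theorem~\ref{thm:CD-mod} applied with $\Sigma(X,N)\cup S$ in place of $S$, via the identifications $\Conf(X,N)\cong M(X,N)$ and ideals $\leftrightarrow$ $A(X)$-submodules established in the two corollaries. Your observation that the $(\Sigma(X,N)\cup S)$-reduced words of $F(X)$ are precisely the $S$-reduced normal words is the only bookkeeping step needed, and the paper already records (in the proof of Theorem~\ref{thm:Mod_M:BGS}) that the $\Sigma(X,N)$-reduced words coincide with $B(X,N)$.
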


\section{Applications}

One of the most interesting questions in the theory of conformal algebras 
is to determine if every torsion-free finite Lie conformal (super)algebra $L$
with operation $[\cdot \oo{\lambda } \cdot ]$
is special, i.e., whether it can be embedded into an associative 
conformal algebra $C$ in such a way that 
\begin{equation}\label{eq:ConfComm}
 [x\oo{\lambda } y] = (x\oo{\lambda } y) -\{y\oo{\lambda } x \}, \quad x,y\in L.
\end{equation}
One of the most natural ways to resolve the speciality problem is to apply GSB theory. 
Namely, suppose $X$ is a basis of $L$ as of $\Bbbk [\partial ]$-module. The multiplication 
table in $L$ is given by 
\begin{equation}\label{eq:LieMultTable}
 [a\oo{\lambda } b ] = \sum\limits_{c\in X} f_c^{a,b}(\partial,\lambda ) c,\quad f_{c}^{a,b}(\partial,\lambda)\in \Bbbk[\partial ,\lambda],
 \ a,b\in X.
\end{equation}
Obviously, $L$ is special if and only if there exists a function $N:X\times X\to \mathbb Z_+$ 
such that the GSB of the ideal generated by the set 
\begin{equation}\label{eq:CommMultTable}
a\oo{n} b - \{b\oo{n} a\} - g_n^{a,b} , \quad a,b\in X,\ n\in \mathbb Z_+, 
\end{equation}
where $g_n^{a,b}=[a\oo{n} b]$ is the coefficient of the right-hand side of \eqref{eq:LieMultTable}
at $\lambda ^{(n)}$, 
does not contain nonzero $\Bbbk[\partial]$-linear combinations of elements from $X$.
The corresponding quotient conformal algebra is the universal associative envelope 
of $L$ relative to the locality function $N$ on generators $X$ \cite{Ro2000}. 
Let us denote it by $U(L;X,N)$.

As an $A(X)$-module, $U(L;X,N)$ is generated by $X$ relative to the defining relations 
\eqref{eq:Mod_M:Locality},
\eqref{eq:Mod_M:RightMul},
and \eqref{eq:CommMultTable}.
Sesqui-linearity of the conformal $\lambda $-product implies that if $[x\oo{\lambda } y]$
is given by \eqref{eq:ConfComm} in an associative conformal algebra $C$ then 
\[
 [x\oo{\lambda } y]\oo{\lambda+\mu} z = x\oo{\lambda }(y\oo{\mu } z) - y\oo{\mu}(x\oo{\lambda }z )
\]
for $x,y,z\in C$. 
Therefore, the following relations hold on $U(L;X,N)$:
\[
 L^a_nL^b_m u - L^b_mL_n^a u - \sum\limits_{s\ge 0}\binom{n}{s} L^{g_s^{a,b}}_{n+m-s} u,\quad a,b\in X,\ n,m\in \mathbb Z_+
\]
for every normal word $u\in B(X,N)$ (in the last summand,
we assume $L^{\partial x}_n = -nL_{n-1}^x$).
Therefore, in order to analyze the structure of $U(L;X,N)$ we may replace 
$A(X)$ with the algebra $A(L;X)$ generated by $X$ relative to the defining 
relations 
\eqref{eq:Alg_A:Relations1}--\eqref{eq:Alg_A:Relations3} along with 
\begin{equation}\label{eq:CommL-operators}
 L^a_nL^b_m  - L^b_mL_n^a  - \sum\limits_{s\ge 0}\binom{n}{s} L^{g_s^{a,b}}_{n+m-s} ,\quad a,b\in X,\ n,m\in \mathbb Z_+,\ L_n^a>L_m^b.
\end{equation}

\begin{prop}\label{prop:U(L;X,N)-algebra}
 Relations 
 \eqref{eq:Alg_A:Relations1}--\eqref{eq:Alg_A:Relations3} and 
 \eqref{eq:CommL-operators}
 form a GSB in the free associative algebra generated by $O$
 with respect to the order $\preceq $.
\end{prop}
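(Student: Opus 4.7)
The plan is to apply Theorem~\ref{thm:CD-ass}: it suffices to verify that every composition between the listed relations reduces to zero modulo the system. By Lemma~\ref{lem:BGS_A}, the compositions internal to \eqref{eq:Alg_A:Relations1}--\eqref{eq:Alg_A:Relations3} are already trivial, so only those involving the new relations~\eqref{eq:CommL-operators} require attention. Each of the listed leading monomials has length two, so no composition of inclusion can occur, and the compositions of intersection split into three families: (A)~$L_n^a L_m^b$ overlapping $L_m^b\partial$, with ambiguity $w = L_n^a L_m^b \partial$; (B)~$L_n^a L_m^b$ overlapping $L_m^b L_p^c$ subject to $L_n^a > L_m^b > L_p^c$, with ambiguity $w = L_n^a L_m^b L_p^c$; (C)~$R_p^c L_n^a$ overlapping $L_n^a L_m^b$, with ambiguity $w = R_p^c L_n^a L_m^b$.

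Case~(C) is immediate: both branches transport $R_p^c$ past both $L$'s by iterating \eqref{eq:Alg_A:Relations3}, and the single-letter correction terms $L^{g_s^{a,b}}_{n+m-s}$ occurring on the right-hand side of \eqref{eq:CommL-operators} themselves commute with $R_p^c$, again by \eqref{eq:Alg_A:Relations3}, so the two reductions agree. Case~(A) is a direct bookkeeping calculation: push $\partial$ to the left on both branches using \eqref{eq:Alg_A:Relations1}, rewrite any remaining products of two $L$'s via \eqref{eq:CommL-operators}, and compare. The residual difference collapses thanks to sesqui-linearity of the $\lambda$-bracket in~$L$, which in operator form is precisely the convention $L_n^{\partial x} = -n L_{n-1}^x$ baked into the statement of \eqref{eq:CommL-operators}.

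Case~(B) is the substantive one. After exhaustively reducing $(L_n^a L_m^b)L_p^c$ and $L_n^a(L_m^b L_p^c)$ via \eqref{eq:CommL-operators}, both expressions become $\Bbbk[\partial]$-linear combinations of single operators $L^{y}_{k}$, and their difference turns out to be the operator-form of the conformal Jacobi identity for~$L$,
\[
[a\oo{n}[b\oo{m} c]] - [b\oo{m}[a\oo{n} c]] - \sum\limits_{s\ge 0}\binom{n}{s}[[a\oo{s} b]\oo{n+m-s} c] = 0,
\]
which holds because $L$ is a Lie conformal algebra.

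The main obstacle lies in the combinatorics of case~(B): each of the three products $L_n^a L_m^b$, $L_m^b L_p^c$, $L_n^a L_p^c$ contributes its own binomial sum of corrections, and the two-letter remainders $L^{g_s^{a,b}}_{n+m-s}L_p^c$ that arise must be reduced by a further application of \eqref{eq:CommL-operators} (with $g_s^{a,b}$ expanded via the multiplication table~\eqref{eq:LieMultTable}) before the comparison can be made. The bureaucratic manipulations run parallel to those in~\eqref{eq:CompInter}. Once (A), (B), and (C) are confirmed in this manner, Theorem~\ref{thm:CD-ass} yields the proposition.
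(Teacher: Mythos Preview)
Your argument is correct. You enumerate the three families of overlaps with the new relations~\eqref{eq:CommL-operators} and dispose of each directly; the essential content of case~(B) is exactly that the operators $L_n^a$ span a Lie algebra whose Jacobi identity is the $n$-th coefficient form of the conformal Jacobi identity in~$L$, which is what you invoke. The paper takes a shorter but less explicit route: rather than reduce the compositions by hand, it observes that the entire system \eqref{eq:Alg_A:Relations1}--\eqref{eq:Alg_A:Relations3}, \eqref{eq:CommL-operators} is the multiplication table of a Lie algebra, namely $\Bbbk\partial \ltimes \big(\mathcal A(L)\ast \mathcal R(X)\big/[\mathcal A(L),\mathcal R(X)]\big)$ with $\mathcal A(L)$ the annihilation algebra of~$L$, and then appeals to the Poincar\'e--Birkhoff--Witt theorem. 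That packaging handles your cases (A), (B), (C) simultaneously, but it incurs a small extra cost: since $\preceq$ is not the pure deg-lex order (words are compared first by their $R$-degree), the paper must note that all relations are $R$-homogeneous to justify transporting the PBW conclusion. Your direct verification sidesteps this order issue entirely. One minor wording point: in case~(B) the two reduced branches are not themselves linear combinations of single $L$'s---they share the common cubic normal form $L_p^cL_m^bL_n^a$ plus quadratic terms---and it is only their \emph{difference} that collapses to a combination of single operators; also, once the convention $L_n^{\partial x}=-nL_{n-1}^x$ is applied, that combination has scalar (not $\Bbbk[\partial]$) coefficients.
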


\begin{proof}
A relation of type \eqref{eq:CommL-operators} has compositions of intersection with 
\eqref{eq:Alg_A:Relations1} and \eqref{eq:Alg_A:Relations3}. It is straightforward to 
check these compositions are trivial. 
Two relations of type \eqref{eq:CommL-operators} may also have a composition of intersection. 
Such a composition is also trivial since \eqref{eq:CommL-operators} is nothing but 
the multiplication table of a Lie algebra spanned by operators $L_n^a$, $n\in \mathbb Z_+$, $a\in X$.
To be more precise, 
$A(L;X)$ 
is the universal associative envelope of the Lie algebra
$\Bbbk \partial \ltimes (\mathcal A(L)\ast \mathcal R(X)/([\mathcal A(L),\mathcal R(X)]))$, 
where $\mathcal A(L)$ is the annihilation algebra of $L$ (positive part 
of the coefficient algebra), $\mathcal R(X)$ is the free Lie algebra generated 
by $R_n^a$, $\ast $ denotes free product 
of Lie algebras, 
and $\partial $ acts as described by \eqref{eq:Alg_A:Relations1}, \eqref{eq:Alg_A:Relations2}.

The classical Poincar\'e---Birkhoff---Witt Theorem states that 
the multiplication table of a Lie algebra is a GSB relative to the deg-lex order based on an 
arbitrary ordering of the generators. The order $\preceq$ we use is not exactly deg-lex, 
but all relations \eqref{eq:Alg_A:Relations1}--\eqref{eq:Alg_A:Relations3},  \eqref{eq:CommL-operators}
are homogeneous relative to the variables $R_n^a$, so we may still conclude 
that this is a GSB.
\end{proof}

\begin{remark}
The last statement in the proof of Proposition \ref{prop:U(L;X,N)-algebra}
is equivalent to so called $1/2$-PBW Theorem for conformal algebras \cite{BFK2004}.
\end{remark}

Therefore, in order to determine if $L$ is embedded into its universal envelope $U(L;X,N)$
we have to find GSB of the $A(L;X)$-module generated by $X$ relative to the defining relations 
\eqref{eq:Mod_M:RightMul},
\eqref{eq:Mod_M:Locality},
and 
\begin{equation}\label{eq:Mod_M:Comm}
 L_n^ab - R_n^ab - g_n^{a,b},\quad a,b\in X,\ n\in \mathbb Z_+. 
\end{equation}
Let us consider two examples of such a computation.

\begin{example}%[Heisenberg---Virasoro Lie conformal algebra]
Lie conformal algebra $L=\Vir\ltimes \Cur\Bbbk $ is generated as a $\Bbbk[\partial ]$-module 
by $X=\{v,h\}$, where 
\[
 \begin{gathered}
{}
 [v\oo{0} v] = \partial v, \quad [v\oo{1} v] = 2 v, \\
 [v\oo{0} h] = \partial h,\quad
 [v\oo{1} h] = h, \quad  [h\oo{1} v]= h, 
\end{gathered}
\]
other products are zero.
\end{example}

For the Heisenberg---Virasoro Lie conformal algebra $L$, 
$A(L;X)$ is generated by $O=\{\partial, L_n^h, R_n^h, L_n^v, R_n^v \mid n\in \mathbb Z_+\}$ 
relative to the following defining relations:
\begin{gather}
{}
L_n^a\partial -\partial L_n^a - nL_{n-1}^a, \quad n\ge 0,\ a\in X,     \label{eq:HV-Ld}\\
R_n^a\partial -\partial R_n^a - nR_{n-1}^a, \quad n\ge 0,\ a\in X,     \label{eq:HV-Rd}\\
R_n^b L_m^b - L_m^bR_n^a,\quad n,m\ge 0,\ a,b\in X,                    \label{eq:HV-RL} \\
L_n^vL_m^v-L_m^vL_n^v-(n-m)L^v_{n+m-1}, \quad n>m\ge 0,                \label{eq:HV-LvLv} \\
L_n^hL_m^v - L_m^vL_n^h - nL_{n+m-1}^h,\quad n,m\ge 0,                 \label{eq:HV-LhLv} \\
L_n^hL_m^h - L_m^hL_n^h,\quad n>m\ge 0.                                \label{eq:HV-LhLh}
\end{gather}
We will use an order on $O^*$ which is slightly different from $\preceq $ used 
in Section \ref{eq:Sect4}: assume $L_n^h>L_m^v$ for all $n,m\ge 0$.
Obviously, \eqref{eq:HV-Ld}--\eqref{eq:HV-LhLh} remains a GSB relative 
to this modified order.

Let us fix the following locality function on $X$:
\[
 N(v,v)=N(h,v)=2,\quad N(v,h)=1,\quad N(h,h)=0.
\]
Then relations
\eqref{eq:Mod_M:Locality} and \eqref{eq:Mod_M:RightMul} 
turn into 
\begin{gather}
L_n^vh , \ n\ge 1,
\quad 
L_m^hh , \ m\ge 0,           \label{eq:HV-Loc_vh} \\
L_n^vv , \  L_n^hv , \  n\ge 2,        
                             \label{eq:HV-Loc_hv} \\
R_0^vh-L_0^hv+\partial L_1^hv,   
\quad R_1^vh+L_1^hv,
\quad 
R^v_nh, \  n\ge 2,            \label{eq:HV-Rvh} \\
R_0^vv-L_0^vv+\partial L_1^vv,
\quad
R_1^vv+L_1^vv,
\quad 
R_n^vv, \  n\ge 0,            \label{eq:HV-Rvv} \\
R_0^hv-L_0^vh, \quad 
R_n^hv,\  n\ge 1,\quad 
R_m^hh, \  m\ge 0.            \label{eq:HV-Rhvh} 
\end{gather}
Commutation relations 
\eqref{eq:Mod_M:Comm} turn into 
\begin{equation}\label{eq:CommHV}
 L_1^vv-v,\quad L_1^hv-h,\quad L_0^hv-L_0^vh.
\end{equation}
It is straightforward to check that the set $S$ of relations 
\eqref{eq:HV-Ld}--\eqref{eq:HV-Rhvh}
form a GSB in the free $A(L;X)$-module generated by~$X$. 
For example, let us show that the composition of intersection
of 
$f=R_0^vL_0^h-L_0^hR_0^v$ and $g=L_0^hv-L_0^vh$
is trivial:
\[
\begin{aligned}
(f,g)_{R_0^vL_0^hv} & =  (R_0^vL_0^h-L_0^hR_0^v)v - R_0^v(L_0^hv-L_0^vh)
= R_0^vL_0^vh - L_0^hR_0^vv \\
 & \equiv  L_0^v R_0^v h - L_0^hR_0^vv 
   \equiv  L_0^v (L_0^hv-\partial L_1^hv) - L_0^h(L_0^vv-\partial L_1^vv)    \\
 & \equiv (L_0^vL_0^h - L_0^hL_0^v)v -\partial L_0^vL_1^hv + \partial L_0^hL_1^vv  
   \equiv \partial L_0^hL_1^vv -\partial L_0^vL_1^hv \\
 & \equiv L_0^h v - L_0^vh \equiv 0 \pmod{\Sigma\cup S, R_0^vL_0^hv}.
\end{aligned}
\]
The set of $(\Sigma\cup S)$-reduced normal words consists of 
\[
 \partial^s (L_0^v)^k a,\quad a\in X,\ s,k\ge 0.
\]
These words form a linear basis of $U(L; X,N)$. 

\begin{example}%[GSB of $U(\Vir; v,3)$]
 Let us evaluate the GSB of the universal associative envelope 
 of the Virasoro conformal algebra relative to $N(v,v)=3$.
\end{example}

Here we will use a different ordering of generators $O$. 
Denote $L_n=L^v_n$, $R^v_n = R_n$, suppose 
\[
 L_0<L_1<\partial <L_2 < \dots <R_0 <R_1 <\dots , 
\]
and compare monomials in $O^*$ first by their degree in $R_n$, 
next by deg-lex rule. 

In these settings, $A(L;X)$ is generated by $O$ relative to the 
following defining relations:
\begin{gather}
 \partial L_0 - L_0\partial , 
			      \label{eq:Vir1}  \\
 \partial L_1 - L_1\partial +L_0, 
			      \label{eq:Vir2}  \\
 L_n\partial - \partial L_0 -nL_{n-1},\quad n\ge 2, 
			      \label{eq:Vir3}  \\
 R_n\partial -\partial R_n - nR_{n-1},\quad n\ge 0,
			      \label{eq:Vir4}  \\
 R_nL_m - L_mR_n,\quad n,m\ge 0,
			      \label{eq:Vir5}  \\
 L_nL_m - L_mL_n -(n-m)L_{n+m-1},\quad n>m\ge 0.
			      \label{eq:Vir6}  
\end{gather}
Polynomials \eqref{eq:Vir1}--\eqref{eq:Vir6} form a GSB in $\Bbbk\langle O\rangle $ by 
Proposition~\ref{prop:U(L;X,N)-algebra}.

As an $A(L;X)$-module, $U(\Vir; v,3)$ is generated by $X=\{v\}$ 
with the following relations:
\begin{gather}
 L_n v ,\quad n\ge 3,
 			      \label{eq:Vir7}  \\
 R_0v-2L_0v+L_1\partial v -\partial^{(2)}L_2v,
 			      \label{eq:Vir8}  \\
 R_1v+L_1v-\partial L_2v,
 			      \label{eq:Vir9}  \\
 R_2v-L_2v,
 			      \label{eq:Vir10}  \\
 R_nv, \quad n\ge 3,
 			      \label{eq:Vir11}  \\
 \partial L_2v-2L_1v+2v.
 			      \label{eq:Vir12}  \\
\end{gather}
Relations \eqref{eq:Vir1}--\eqref{eq:Vir12} have the following compositions of intersection:
\begin{itemize}
 \item \eqref{eq:Vir5} with \eqref{eq:Vir7}, $w=R_mL_nv$, $n\ge 3$;
 \item \eqref{eq:Vir6} with \eqref{eq:Vir7}, $w=L_mL_nv$, $m>n\ge 3$;
 \item \eqref{eq:Vir3} with \eqref{eq:Vir12}, $w=L_n\partial L_2v$, $n\ge 2$;
 \item \eqref{eq:Vir4} with \eqref{eq:Vir12}, $w=R_mL_nv$, $n\ge 0$.
\end{itemize}
Calculation of these compositions gives only one new defining relation
\begin{equation}\label{eq:Vir13}
 L_2L_2v .
\end{equation}
Denote by $S$ the set of relations \eqref{eq:Vir1}--\eqref{eq:Vir13}
Relation \eqref{eq:Vir13} has a composition of intersection with \eqref{eq:Vir6}, $w=L_nL_2L_2v$, $n\ge 3$, which 
is trivial modulo $S$ and $w$. Yet another composition is obtained from \eqref{eq:Vir5} and \eqref{eq:Vir13}
relative to $w=R_nL_2L_2v$, $n\ge 0$. This composition is obviously trivial for $n\ge 2$. Let us show as an example the 
computation of that composition for $n=0$:
\[
 \begin{aligned}
R_0L_2L_2v  & \equiv   L_2L_2R_0v \equiv L_2L_2(L_0v -\partial L_1v + \partial^{(2)}L_2v) \\
& \equiv L_2L_2L_0v -L_2\partial L_2L_1v - 2L_2L_1L_1v + 2L_2\partial L_1L_2v + L_2L_0L_2v \\
& \equiv L_2L_2L_0v - 2L_1L_2L_1v -2L_2L_1L_1v  +4L_1L_1L_2v + L_2L_0L_2v \\  
& \equiv 2L_2L_0L_2v + 2L_2L_1 v - 2L_1L_2v-2L_2L_1L_1v+2L_1L_1L_2v  \\
& \equiv 4L_1L_2v + 2L_1L_2v + 2L_2v + 2L_1L_1L_2v -2L_1L_2v -2L_2L_1L_1 v \\
& \equiv 4L_1L_2v +2L_2v +2L_1L_1L_2v -2L_1L_1L_2v -4L_1L_2v -2L_2v \\
& \equiv 0 \pmod {S,R_0L_2L_2v }.
 \end{aligned}
\]
As a result, 
$S$ is a GSB of $U(\Vir; v,3)$.

\begin{remark}
Note that Proposition \ref{prop:U(L;X,N)-algebra} remains valid for conformal superalgebras provided that 
we add the appropriate parity to \eqref{eq:CommL-operators}:
\[
L^a_nL^b_m  -(-1)^{p(a)p(b)} L^b_mL_n^a  - \sum\limits_{s\ge 0}\binom{n}{s} L^{g_s^{a,b}}_{n+m-s} ,
\quad a,b\in X,\ n,m\in \mathbb Z_+,\ L_n^a>L_m^b.
\]
Therefore, in order to find a GSB for the universal envelope $U(L;X,N)$ of a 
Lie conformal superalgebra, 
we have to find a GSB of the $A(L;X)$-module generated by $X$ relative to the defining relations 
\eqref{eq:Mod_M:RightMul}, \eqref{eq:Mod_M:Locality}, 
and \eqref{eq:Mod_M:Comm} with appropriate parity:
\[
 L_n^ab - (-1)^{p(a)p(b)} R_n^ab - g_n^{a,b},\quad a,b\in X,\ n\in \mathbb Z_+.
\]
\end{remark}

\noindent
{\bf Acknowledgements.}
The work was supported by the Program of fundamental scientific researches of the Siberian Branch of 
Russian Academy of Sciences, I.1.1, project 0314-2016-0001.

\end{document}